\newtheorem{theorem}{Theorem}
\newtheorem{corollary}[theorem]{Corollary}
\begin{document}

\begin{frontmatter}[classification=text]


\author[jd]{James Davies}

\begin{abstract}
We prove that there are intersection graphs of axis-aligned boxes in $\mathbb{R}^3$ and intersection graphs of straight lines in $\mathbb{R}^3$ that have arbitrarily large girth and chromatic number.
\end{abstract}
\end{frontmatter}

\section{Introduction}\label{section1}

Erd{\H{o}}s~\cite{erdos1959graph} proved that there exist graphs with arbitrarily large girth and chromatic number. We prove that there exist such graphs that can be realised geometrically as intersection graphs of axis-aligned boxes in $\mathbb{R}^3$ and of straight lines in $\mathbb{R}^3$. To do so, we make use of a surprising connection to arithmetic Ramsey theory. These are the first non-trivial examples of geometric intersection graphs with arbitrarily large girth and chromatic number.

\begin{theorem}\label{main}
	There are intersection graphs of axis-aligned boxes in $\mathbb{R}^3$ with arbitrarily large girth and chromatic number.
\end{theorem}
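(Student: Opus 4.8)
The plan is to construct, for every pair of integers $g$ and $k$, a finite graph $G=G_{g,k}$ that is an intersection graph of axis-aligned boxes in $\mathbb{R}^3$, has girth larger than $g$, and has chromatic number larger than $k$. The lower bound on $\chi(G)$ is where a Ramsey-type theorem from arithmetic combinatorics enters --- I would reach first for the Hales--Jewett theorem, with van der Waerden's theorem or the Gallai--Witt multidimensional version as fallbacks --- and this is the ``surprising connection'' promised in the introduction. It is worth noting at the outset why the dimension and the girth make this delicate: every finite graph is a box intersection graph in sufficiently high dimension, so the whole content lies in realising one in dimension exactly $3$, while in dimension $2$ the statement simply fails, since even triangle-free axis-parallel rectangle intersection graphs have bounded chromatic number (Asplund--Gr\"unbaum).

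First I would set up the combinatorial skeleton. Fix an arithmetic Ramsey statement of the shape: \emph{for every number of colours $r$ there is a finite ground set $X$ such that every $r$-colouring of $X$ contains a monochromatic copy of a prescribed configuration}, and choose $X$ and the family $\mathcal F$ of configurations so that distinct members of $\mathcal F$ overlap in very few points (for combinatorial lines in a Hales--Jewett cube, any two distinct lines meet in at most one point). Take $V(G_{g,k})=X$, or a carefully thinned subset of it, and place edges \emph{inside} each configuration --- but only a \emph{sparse} subgraph of it, designed so that: (i) every monochromatic configuration still contains an edge, whence taking $r=k$ makes a proper $k$-colouring impossible; and (ii) no cycle of length at most $g$ of $G_{g,k}$ can be assembled out of edges drawn from only a few configurations. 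Part (i) is immediate from the Ramsey theorem. Part (ii) is the crux, and is where the arithmetic structure must be exploited: a short cycle would trace a short closed walk through configurations that pairwise share at most one vertex, and one must argue that any such closed walk forces an algebraic coincidence among a bounded number of coordinates of $X$ that cannot occur --- if necessary after passing to a generic-position subgrid or thinning $\mathcal F$, while keeping the Ramsey conclusion intact.

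Next I would realise $G_{g,k}$ geometrically. Each vertex gets an axis-aligned box $I_1\times I_2\times I_3\subseteq\mathbb{R}^3$, and since two boxes intersect exactly when their projections to all three axes overlap, the three coordinate axes can be used to encode three ``independent layers'' of the combinatorial adjacency, arranged so that ``overlapping on every axis'' coincides with ``adjacent in $G_{g,k}$''. On each axis one lays down a suitably nested and interleaved family of intervals realising one layer; the real work is in certifying the \emph{non}-edges, namely that for every non-adjacent pair of vertices at least one of the three axes separates their intervals. The same skeleton should also be realisable by straight lines in $\mathbb{R}^3$: position the lines so that two of them meet iff the corresponding vertices are adjacent, using that two generic lines in $\mathbb{R}^3$ are skew, so that all non-edges come for free and only the prescribed crossings must be engineered.

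The main obstacle, as ever in this subject, is reconciling the two goals: forcing $\chi$ large wants a rich, heavily overlapping family of monochromatic configurations, whereas forcing the girth above $g$ wants the configurations to interact as little as possible and to admit no short closed walk --- and, on top of that, the resulting graph must be squeezed into the rigid form of a box intersection graph in just three dimensions. I expect most of the effort to go into (a) choosing an arithmetic Ramsey statement whose monochromatic configurations can be packed with pairwise overlap small enough that no short cycle survives, yet which stays unavoidable under $k$ colours, and (b) checking that the box assignment creates no spurious intersection between boxes of non-adjacent vertices; the girth bound of step (ii) I expect to be the single hardest ingredient.
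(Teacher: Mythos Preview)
Your outline correctly locates the arithmetic Ramsey connection, but there are two genuine gaps between it and a working proof.

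First, the girth control. You note that configurations such as combinatorial lines meet pairwise in at most one point, and hope to ``thin $\mathcal{F}$'' so that no short closed walk of configurations survives while the Ramsey conclusion is retained. This is precisely the content of the \emph{sparse} Gallai theorem of Pr\"omel and Voigt (Theorem~\ref{Gallai} in the paper): for any finite $T\subset\mathbb{R}$ and any $g,k$ there is a finite $X\subset\mathbb{R}$ such that every $k$-colouring of $X$ contains a monochromatic homothetic copy of $T$, \emph{and} no $\lfloor g/3\rfloor$ homothetic copies of $T$ in $X$ form a cycle. Without invoking such a sparse partition-regularity theorem, your step~(ii) is a wish rather than an argument; pairwise small overlap of configurations is nowhere near sufficient to rule out short cycles, and there is no ad hoc thinning that one can verify preserves the Ramsey property.

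Second, and more structurally, the paper does \emph{not} build an abstract $G_{g,k}$ first and then search for a box realisation. The construction is inductive in $k$, in the style of Tutte: given a collection $\mathcal{B}_{g,k}$ of \emph{grounded square boxes} (axis-aligned boxes with square horizontal faces and one vertical edge on the plane $x=y$) with girth at least $g$ and chromatic number at least $k$, one lets $T\subset\mathbb{R}$ be the projection of their grounding edges onto a line, applies the sparse Gallai theorem to this very $T$ to obtain $X$, plants a thin pairwise-disjoint base box $B_x$ at each $x\in X$, and then for every homothetic copy $T'\subset X$ of $T$ places a homothetically scaled, $z$-translated copy of $\mathcal{B}_{g,k}$ so that each of its boxes meets exactly its corresponding base box and nothing else. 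Chromatic number goes up by one for the usual Tutte reason; girth is preserved because any short cycle must visit at least $\lceil g/3\rceil$ base boxes, which would yield a forbidden short cycle of homothetic copies in $X$. The geometry is carried through the induction, and grounding on a common plane is exactly what makes the one-dimensional homothety from the Ramsey theorem act compatibly on the three-dimensional arrangement. Your ``three independent layers on the three axes'' picture gives no such mechanism, and certifying all non-edges of an abstractly defined $G_{g,k}$ after the fact would be at least as hard as the original problem.
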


\begin{theorem}\label{main line}
	There are intersection graphs of straight lines in $\mathbb{R}^3$ with arbitrarily large girth and chromatic number.
\end{theorem}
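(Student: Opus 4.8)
The plan is to produce, for each pair of integers $g,t \ge 3$, a single graph $G = G(g,t)$ of girth at least $g$ and chromatic number at least $t$, together with a representation of $G$ by straight lines in $\mathbb{R}^3$; letting $g$ and $t$ grow then gives Theorem~\ref{main line}. (That such graphs exist abstractly is Erd\H{o}s's theorem; the content here is the geometric realization.) I would build $G$ by a Burling-type recursion, run for enough rounds: starting from a single vertex, $G_{i+1}$ is assembled from many vertex-disjoint rescaled copies of $G_i$ by attaching, for each copy and each member of a prescribed family of independent sets of that copy --- its \emph{probes} --- a new vertex adjacent to exactly that probe, and by designating which sets become the probes of $G_{i+1}$. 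The chromatic number is pushed up in the usual Burling way --- whenever a probe is monochromatic in a proper colouring the vertex attached to it is barred from that colour --- so with enough rounds it exceeds $t$. What is new, compared with Burling's triangle-free construction, is that the probes must be chosen \emph{sparse and far apart} in the graph metric, so that no cycle of length less than $g$ ever closes up, while remaining numerous and structured enough to force the colouring behaviour above. Reconciling these two demands is exactly where arithmetic Ramsey theory enters.

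Concretely, I would index the rescaled copies, and the lines making up each probe, by finite sets of integers (or rationals), arranged so that the probes in round $i$ are --- up to this indexing --- rescaled and translated copies of one fixed arithmetic configuration, placed at widely separated scales. Two things then happen. First, for every $r$, a van der Waerden / Gallai-type theorem ensures that if the probes are drawn from a sufficiently rich family of such configurations then every $r$-colouring of the vertices of $G_i$ makes some probe monochromatic, which is precisely what drives the chromatic number up from round to round. Second, because the probes are spread out, any short closed walk in $G$ would force a collision among those scales, that is, a monochromatic solution inside a bounded colouring of the index set to one of finitely many fixed linear equations; one then chooses the scales by a van der Waerden / Behrend-type avoidance so that the only solutions are the degenerate ones that do not actually trace out a short cycle. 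Making one family of configurations and one choice of scales serve both purposes, and keeping the girth bookkeeping honest across all levels --- the probes at level $i$ live on vertices whose coordinates encode every earlier level --- is the conceptual heart of the argument and the step I expect to be hardest.

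For the line representation I would start from the planar segment drawing of a Burling-type graph, in which each probe is a bundle of nearly parallel segments all met by a single transversal segment through a common point, and lift it into $\mathbb{R}^3$, using the extra coordinate to pull the probes apart along precisely the separated scales supplied by the arithmetic step, so that segments belonging to different copies, or to probes meant to be almost disjoint, become skew rather than crossing. One then extends every segment to a full line --- its unbounded part meeting nothing new, which is where $\mathbb{R}^3$ is genuinely more forgiving than the plane --- and a routine general-position perturbation clears the remaining accidental coplanarities. If, as I would expect, the lines can moreover be taken nearly parallel to the coordinate axes, then replacing each by a sufficiently thin axis-aligned box around it reproves Theorem~\ref{main} as well.
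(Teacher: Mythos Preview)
Your high-level intuition is right --- a Tutte/Burling-type recursion driven by arithmetic Ramsey theory is exactly how the paper proceeds --- but there is a genuine gap at precisely the step you flag as hardest. You propose to use a van der Waerden/Gallai-type statement to \emph{find} monochromatic configurations (pushing up the chromatic number) and, separately, ``van der Waerden / Behrend-type avoidance'' on the scales to \emph{avoid} configurations (controlling the girth). These pull in opposite directions: a Behrend-type set is by construction one on which the relevant Ramsey property fails, so you cannot place your probes there and still expect a monochromatic copy in every colouring. The paper does not separate the two tasks; it uses a single off-the-shelf \emph{sparse Ramsey} theorem, the sparse Gallai theorem of Pr\"omel and Voigt, which produces a finite set $X\subset\mathbb{R}$ such that (i) every $k$-colouring of $X$ contains a monochromatic homothetic copy of a fixed template $T$, and (ii) the hypergraph on $X$ whose edges are the homothetic copies of $T$ has no cycle of length below a prescribed bound. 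Property~(i) forces the chromatic number up; property~(ii) keeps the girth large; and the whole point is that both hold for the \emph{same} $X$. Without naming this tool (or reproving it), your outline does not close.

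Your geometric plan is also quite different from the paper's and, as written, too vague to assess. The paper does not lift a planar segment drawing into $\mathbb{R}^3$; it works with lines in $\mathbb{R}^3$ throughout. At each inductive step it picks a plane $P$ meeting every current line in a distinct point, projects those points onto an auxiliary line $L\subset P$ to obtain the template $T$, applies sparse Gallai to $T$ to get $X\subset L$, lays down a new line in $P$ perpendicular to $L$ through each $x\in X$, and then, for every homothetic copy $T'$ of $T$ inside $X$, places a homothetic copy of the old line configuration matched to $\{S_{t'}:t'\in T'\}$, translated along the direction of the new lines so that distinct copies stay pairwise skew. Finally, your closing remark --- that thickening nearly axis-parallel lines to thin axis-aligned boxes would recover Theorem~\ref{main} --- does not work: a thin axis-aligned box around a line that is not itself axis-parallel does not reproduce that line's intersection pattern, and indeed the paper proves the box theorem by a separate (structurally parallel) argument with its own geometry.
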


In 1965, Burling~\cite{burling1965coloring} proved that there are axis-aligned boxes in $\mathbb{R}^3$ whose intersection graphs are triangle-free and have arbitrarily large chromatic number.
In light of Burling's construction, the problem of whether intersection graphs of axis-aligned boxes in $\mathbb{R}^3$ with large girth have bounded chromatic number was raised by Kostochka and Perepelitsa~\cite{kostochka2000colouring}. Later Kostochka~\cite{kostochka2004coloring} further speculated, for all positive integers $d$, that intersection graphs of axis-aligned boxes in $\mathbb{R}^d$ with girth at least 5 should have bounded chromatic number. Theorem~\ref{main} extends Burling's result and shows that this is not the case even with any given larger girth in $\mathbb{R}^3$. We prove Theorem~\ref{main} in the more restricted setting of ``grounded square boxes'' where the boxes intersect a given plane and their top and bottom faces are squares (see Theorem~\ref{grounded boxes}).

In the plane, the situation is different. In 1948, Bielecki~\cite{colouring1948} asked if triangle-free intersection graphs of axis-aligned rectangles have bounded chromatic number. This was answered positively by Asplund and Grünbaum~\cite{grunbaum1960coloring}, who proved the more general statement that axis-aligned rectangle intersection graphs with bounded clique number have bounded chromatic number, i.e., they are $\chi$-bounded. The best known $\chi$-bounding function for such graphs is $O(\omega \log \omega)$~\cite{chalermsook2020coloring}.

The problem of whether intersection graphs of lines in $\mathbb{R}^3$ are $\chi$-bounded had been raised explicitly by Pach, Tardos, and T{\'o}th~\cite{pach2017}, but had been circulating in the community a few years prior.
Norin~\cite{norin} (see~\cite{pach2020}) answered this in the negative by showing that  double shift graphs are line intersection graphs. Since Norin's construction had only been communicated privately, we present it in Section~\ref{section4} before extending the result by proving Theorem~\ref{main line}.

Intersection graphs of straight (infinite) lines are a subclass of the intersection graphs of straight line segments. Thus Theorem~\ref{main line} also applies to the class of intersection graphs of segments in $\mathbb{R}^3$.
The significance of Burling's construction was highlighted when it was used much later to show that there are triangle-free intersection graphs of segments in the plane with arbitrarily large chromatic number~\cite{pawlik2014triangle}.
This disproved Scott's~\cite{scott1997trees} conjecture that graphs not containing an induced subdivision of a given graph $H$ are $\chi$-bounded.
In light of Theorem~\ref{main}, segment intersection graphs are perhaps a more interesting setting for Burling graphs, indeed it is even conjectured~\cite{chudnovsky2016induced} that Burling graphs might be the only obstruction to intersection graphs of segments in the plane being $\chi$-bounded.

Kostochka and Ne\v{s}et\v{r}il~\cite{kostochka1998coloring} proved that if the intersection graph of $n$ segments in the plane has girth at least 5, then the graph has only $O(n)$ edges, consequently such graphs have bounded chromatic number. Fox and Pach~\cite{fox2008separator} asked if this result could be extended to segments in $\mathbb{R}^3$; Theorem~\ref{main line} answers this question with an emphatic no.

For $k\ge 4$, Chudnovsky, Scott, and Seymour~\cite{chudnovsky2016induced} proved that intersection graphs of rectangles or segments in the plane with no induced cycle of length $k$ are $\chi$-bounded. Compared to this, Theorems~\ref{main} and~\ref{main line} show that the situation in $\mathbb{R}^3$ is much more dire, and that in a sense these graphs are much further away from being $\chi$-bounded.
For more on \mbox{$\chi$-boundedness}, see the recent survey by Scott and Seymour~\cite{scott2018survey}.

Tomon and Zakharov~\cite{tomon2020tur} recently showed that there exist bipartite box intersection graphs with girth at least 6 that have a super-linear number of edges. This resolved another problem of Kostochka~\cite{kostochka2004coloring}.
We extend this showing that even bipartite box intersection graphs with arbitrarily large girth can have a super-linear number of edges.

\begin{corollary}\label{corollary}
	There are bipartite intersection graphs of axis-aligned boxes in $\mathbb{R}^3$ with arbitrarily large girth and minimum degree.
\end{corollary}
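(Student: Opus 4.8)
The plan is to deduce Corollary~\ref{corollary} from Theorem~\ref{main}, or rather from its refinement in the grounded setting, Theorem~\ref{grounded boxes}. Fix the desired girth $\gamma$ and minimum degree $d$. First I would apply Theorem~\ref{main} with girth parameter $\gamma$ and a chromatic-number parameter $N$, to be chosen large in terms of $\gamma$ and $d$, obtaining a box intersection graph $G$ in $\mathbb{R}^3$ with girth at least $\gamma$ and $\chi(G)\ge N$.

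The first reduction is routine. Since $\chi(G)\ge N$, the graph $G$ is not $(N-2)$-degenerate, so repeatedly deleting vertices of degree at most $N-2$ terminates at a non-empty induced subgraph $H$ with minimum degree at least $N-1$. Discarding the boxes of the deleted vertices shows that $H$ is again a box intersection graph, and $H$ inherits girth at least $\gamma$ from $G$. Thus we arrive at a box intersection graph of large girth and large minimum degree, and the only remaining task is to replace it by a bipartite one without destroying the minimum degree.

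This last step is the heart of the matter, and the obstacle is that, in order to stay within the class of box intersection graphs, we may only pass to induced subgraphs, whereas the standard recipes for making a graph bipartite (deleting the edges inside a maximum cut, or passing to a bipartite double cover) do not yield induced subgraphs. One tempting attempt is to take two consecutive layers $L_i,L_{i+1}$ of a breadth-first search from some vertex: when $i$ is small compared with $\gamma$, a within-layer edge together with the tree-paths to the root would create a cycle shorter than $\gamma$, so each of $L_i,L_{i+1}$ is independent and $H[L_i\cup L_{i+1}]$ is a bipartite induced subgraph of large girth. The difficulty is the ``boundary effect'': a vertex of $L_{i+1}$ may have almost all of its neighbours in $L_{i+2}$, and indeed any bipartite induced subgraph contained in a ball of small radius is a forest, hence useless for the minimum degree. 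So a purely local argument cannot suffice; some global feature of the graph has to be used.

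What I expect to make it work is the concrete structure of the graphs produced by Theorem~\ref{grounded boxes}. Being assembled from arithmetic Ramsey-theoretic ingredients, these graphs come with a natural ``levelled'' description, and I would look either for a canonical bipartition of $H$ in which every vertex on each side still has many neighbours on the other side, or for a variant of the construction that, stopped one stage before the final chromatic amplification, already outputs a bipartite box intersection graph of large girth and large minimum degree. Either way, taking $N$ (equivalently, the depth of the construction) sufficiently large in terms of $\gamma$ and $d$ should yield a bipartite box intersection graph in $\mathbb{R}^3$ with girth at least $\gamma$ and minimum degree at least $d$. Finally, such a graph on $n$ vertices has at least $dn/2$ edges, so letting $d\to\infty$ gives bipartite box intersection graphs of arbitrarily large girth with a super-linear number of edges, extending the result of Tomon and Zakharov.
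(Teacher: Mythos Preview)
Your first two steps are correct and coincide with the paper's argument: apply Theorem~\ref{main} to obtain a box intersection graph $G$ of girth at least $\gamma$ and chromatic number at least $N$, then pass to an induced subgraph $H$ of minimum degree at least $N-1$ by iteratively deleting low-degree vertices. This is exactly right, and $H$ is still a box intersection graph of girth at least $\gamma$.

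The gap is precisely where you yourself flag it: you have not actually produced a bipartite induced subgraph of $H$ with large minimum degree. You correctly diagnose that BFS layers fail because of boundary effects, and you then appeal to an unspecified ``canonical bipartition'' or ``variant of the construction'' without supplying one. As written, the proof is incomplete at the crucial step.

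The paper closes this gap with a single black-box result that you are missing: a theorem of Kwan, Letzter, Sudakov, and Tran asserting that every triangle-free graph with minimum degree at least $c$ contains an \emph{induced} bipartite subgraph of minimum degree at least $\Omega(\ln c/\ln\ln c)$. Since $H$ has girth at least $\gamma\ge 4$ it is triangle-free, so applying this theorem with $c=N-1$ yields an induced bipartite subgraph of $H$ (hence a box intersection graph, with girth still at least $\gamma$) whose minimum degree can be made as large as desired by taking $N$ large. This is exactly the ``global'' ingredient you were searching for, and it removes any need to inspect the internal structure of the construction.

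The paper does remark that one can alternatively modify the construction of Theorem~\ref{grounded boxes} directly to output bipartite graphs, which is one of your two speculated routes, but it leaves that as an exercise and gives the Kwan--Letzter--Sudakov--Tran deduction as the actual proof.
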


As one may suspect, it is possible to prove Corollary~\ref{corollary} by modifying the construction we use to prove Theorem~\ref{main}. We leave this as an exercise, and instead note that it can also be instantly deduced as a corollary of Theorem~\ref{main} and a recent theorem of Kwan, Letzter, Sudakov, and Tran~\cite{kwan2020dence} that every triangle-free graph with minimum degree at least $c$ contains an induced bipartite subgraph of minimum degree at least $\Omega(\ln c / \ln \ln c)$.
We may also obtain an analogue of Corollary~\ref{corollary} for lines in $\mathbb{R}^3$ in the same way.

One pleasing consequence of the connection to arithmetic Ramsey theory is that we may obtain Burling's classical result as an application of Van der Waerden's theorem~\cite{vanderwaerden} on arithmetic progressions. To prove Theorems~\ref{main} and~\ref{main line} we require a certain strengthening of Van der Waerden's theorem which we introduce in the next section.

\section{Preliminaries}\label{section2}

A \emph{homothetic map} $f:\mathbb{R}^d \to \mathbb{R}^d$ is one of the form $f(x)=sx+\bar{c}$ for some $s\in \mathbb{R}_{>0}$ and $\bar{c}\in \mathbb{R}^d$. 
In other words, a homothetic map is a composition of uniform scaling and a translation.
A set $T'\subset \mathbb{R}^d$ is a \emph{homothetic copy} of a set $T\subset \mathbb{R}^d$ if there is a homothetic map $f$ such that $f(T)=T'$.
We say that a collection of distinct sets $T_1,\dots , T_k$ form a cycle $C$ of length $k$ if there exist distinct elements $x_1,\dots , x_k$ such that for all $i\in \{1,\dots ,k-1\}$ we have $x_i \in T_i \cap T_{i+1}$ and $x_k \in T_k \cap T_1$.
A triangle-free intersection graph of a collection of sets $\cal T$ has a cycle of length $k$ if and only if there is a collection of $k$ sets contained in $\cal T$ that form a cycle of length $k$.
We often find it convenient to consider colourings and cycles of objects in $\mathbb{R}^3$ directly rather than in their intersection graphs.

We require a sparse version of Gallai's theorem~\cite{rado1933studien} proven by Pr{\"o}mel and Voigt~\cite{promel1990sparse}. The one dimensional version is sufficient for our purposes.

\begin{theorem}[Pr{\"o}mel and Voigt~\cite{promel1990sparse}]\label{Gallai}
	Let $T\subset \mathbb{R}$ be a finite subset containing at least three elements and let $g$ and $k$ be positive integers. Then there exists a finite set $X\subset \mathbb{R}$ such that every $k$-colouring of $X$ contains a monochromatic homothetic copy $T'$ of $T$ and no set of at most $g-1$ homothetic copies of $T$ form a cycle.
\end{theorem}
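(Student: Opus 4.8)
The plan is as follows. By applying a homothety to $T$ we may assume that $T = \{t_1 < t_2 < \cdots < t_m\} \subseteq \{0,1,\dots,M\}$ with $t_1 = 0$, $t_m = M$ and $m \ge 3$, since homothetic copies of $T$ are unchanged under this normalisation. We then seek a finite set $X \subset \mathbb{R}$ (it will live in $\mathbb{Q}$) such that every $k$-colouring of $X$ contains a monochromatic homothetic copy of $T$ with all of its points in $X$, and such that the hypergraph $\mathcal{H}(X)$ with vertex set $X$ and edge set $\{\, T' : T' \subseteq X,\ T'\text{ a homothetic copy of }T \,\}$ has no cycle (in the sense defined above) of length at most $g-1$. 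The Ramsey half of this comes for free from classical results: by Van der Waerden's theorem every $k$-colouring of a sufficiently long interval $[N]$ of integers contains a monochromatic $(M+1)$-term arithmetic progression $\{a, a+d, \dots, a+Md\}$, which contains the monochromatic homothetic copy $\{a + t_1 d, \dots, a + t_m d\}$ of $T$; equivalently, Gallai's theorem is the Hales--Jewett theorem transported through the injective base-$B$ encoding $\phi\colon T^n \to \mathbb{Z}$, $\phi(w) = \sum_i w_i B^{i-1}$ with $B > M$, under which every combinatorial line maps to a homothetic copy of $T$. The entire difficulty is that such sets $X$ (for instance $[N]$ itself) are saturated with short cycles of copies of $T$, so the real content of the theorem is to obtain large girth at the same time.

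For that I would run the Ne\v{s}et\v{r}il--R\"odl \emph{partite construction}. Abstractly, one builds, by iterated amalgamation over copies of $T$, a finite hypergraph $\mathcal{F}$ whose edges are ``potential'' homothetic copies of $T$ --- ordered $m$-tuples of vertices carrying the positional data of $T$ --- such that (i) under every $k$-colouring of $V(\mathcal{F})$ some edge of $\mathcal{F}$ is monochromatic, and (ii) $\mathcal{F}$ has girth at least $g$. The induction runs on the number of edges that still need to be ``protected'': at each step one picks an edge $e$ of the current pattern, takes many disjoint Ramsey copies of the pattern, and amalgamates them along copies of $e$ that are themselves arranged as a high-girth Ramsey configuration ``one coordinate down'', the base case of this nested induction being supplied by Van der Waerden's (equivalently, Gallai's) theorem on a single coordinate. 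Ensuring that the girth does not drop at an amalgamation step is the first point that requires care, but controlling precisely this is what the partite method is designed to do.

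The second, and I expect the main, obstacle is to realise the abstract hypergraph $\mathcal{F}$ as a genuine set $X \subset \mathbb{R}$ without creating \emph{accidental} homothetic copies of $T$ that would drop the girth of $\mathcal{H}(X)$ below that of $\mathcal{F}$. I would handle this by genericity: realise $V(\mathcal{F})$ as $X = \{\, \sum_{i} \lambda_i\, c_i(v) : v \in V(\mathcal{F}) \,\}$, where the $c_i(v) \in T$ are the coordinate data produced by the construction and $\lambda_1, \dots, \lambda_n$ are positive reals that are algebraically independent over $\mathbb{Q}$. A short computation using only that $T \subset \mathbb{Z}$ and that the $\lambda_i$ are linearly independent over $\mathbb{Q}$ shows that for any homothetic copy $\{x_1 < \cdots < x_m\}$ of $T$ contained in $X$, writing $x_j = \sum_i \lambda_i c_i(v^j)$, each coordinate sequence $(c_i(v^1), \dots, c_i(v^m))$ is either constant or an affine, possibly order-reversing, image of $(t_1, \dots, t_m)$ lying in $T$; hence the copy pulls back to a \emph{generalised line} of $\mathcal{F}$, where this notion enlarges that of a combinatorial line only by coordinate patterns that depend on $T$. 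It therefore suffices to run the partite construction so that $\mathcal{F}$ has girth at least $g$ with respect to this slightly enlarged family of generalised lines; the amalgamation machinery is flexible enough to absorb the extra edge types. Assembling these pieces yields the required finite set $X$. The crux throughout --- and the reason the statement genuinely belongs to arithmetic Ramsey theory --- is the need to control simultaneously the Hales--Jewett / Van der Waerden Ramsey property and every short cycle of (generalised) homothetic copies of $T$.
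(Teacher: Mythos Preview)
The paper does not prove this statement. Theorem~\ref{Gallai} is quoted as a result of Pr\"omel and Voigt and is used as a black box in the proofs of Theorem~\ref{grounded boxes} and Theorem~\ref{line}; the paper contains no argument for it beyond the citation. So there is no ``paper's own proof'' to compare your proposal against.

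For what it is worth, your outline is in the right neighbourhood of how Pr\"omel and Voigt actually proceed: a Hales--Jewett/Gallai base case combined with a partite-style amalgamation to force high girth, followed by an arithmetic embedding of the resulting combinatorial object into $\mathbb{R}$. Your identification of the main obstacle --- that an arbitrary embedding of the abstract hypergraph into $\mathbb{R}$ may create \emph{accidental} homothetic copies of $T$ which could short-circuit the girth --- is exactly the point where the argument has genuine content, and your algebraic-independence trick together with the coordinate-wise computation is the natural way to tame it. The one place where your sketch is still soft is the claim that the partite construction ``is flexible enough to absorb the extra edge types'' (the generalised lines arising from non-constant coordinates that are affine images of $T$ other than the identity); making this precise requires enumerating those finitely many patterns and feeding them into the amalgamation from the start, which is routine but should be said. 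None of this, however, bears on the present paper, which simply imports the theorem.
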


Pr{\"o}mel and Voigt~\cite{promel1988sparse} also proved sparse versions of other theorems including the Hales--Jewett theorem~\cite{hales1963regularity} and the Graham--Rothschild theorem~\cite{graham1971ramsey}. A special case of Theorem~\ref{Gallai} is also equivalent to a sparse Van der Waerden's theorem.

The graphs we shall construct are based on a hypergraph variation of Tutte's~\cite{descartes1947three,descartes1954solution} construction of triangle-free graphs with arbitrarily large chromatic number that has been considered several times before~\cite{kostochka1999properties,toft1974color,nevsetvril1979short}. Although we need not make any mention of hypergraphs, their use is implicit in the application of Theorem~\ref{Gallai}, indeed this theorem essentially provides a very specialised hypergraph.

A necessarily more relaxed variation of Tutte's construction was also used to show that there are axis-aligned boxes in $\mathbb{R}^3$ with disjoint interiors whose intersection graph has arbitrarily large chromatic number~\cite{reed2008painting}.
Magnant and Martin~\cite{Magnant2011box} further showed that there are such intersection graphs where the boxes only intersect on their top and bottom faces (and thus are triangle-free), and have arbitrarily large chromatic number.

We remark that while Tutte's construction can be sparsified, these variations used for boxes with disjoint interiors cannot be;
an intersection graph of $n$ axis-aligned boxes in $\mathbb{R}^3$ with disjoint interiors, and with girth at least 5, has at most $24n$ edges.
To show this, first observe that if two boxes with disjoint interiors intersect, then there is a plane that contains a pair of intersecting faces, one from each of the two boxes. Glebov~\cite{glebov2002box} proved that every intersection graph of $r$ axis-aligned rectangles in the plane with girth at least 5 has at most $4r$ edges. Each of the 6 faces of a box is contained in a unique plane, so we may obtain the bound of $24n$ by applying Glebov's result to each plane contained in $\mathbb{R}^3$ and the rectangular faces that they contain.

It would be interesting to obtain improved bounds for both the number of edges and the chromatic number of these graphs.

\section{Boxes}\label{section3}

Before proving Theorem~\ref{main}, we need to introduce a more restricted setting to facilitate the inductive argument. 
Let $p_x:\mathbb{R}^3 \mapsto \mathbb{R}$ denote the projection onto the $x$-axis. Similarly for $p_y$ and $p_z$.
We say that an axis-aligned box $B\subseteq \mathbb{R}^3$ is a \emph{square box} if its top and bottom faces are squares.
Let $P_{x=y}\subset \mathbb{R}^3$ be the $x=y$ plane. We say that a square box $B\subset \mathbb{R}^3$ is \emph{grounded} if $B$ intersects $P_{x=y}$, and $p_x(a)\ge p_y(a)$ for all $a\in B$.
Notice that if $B$ is grounded, then $B \cap P_{x=y}$ is a vertical line segment, and an edge of the box.
A graph is a \emph{grounded square box} graph if it is the intersection graph of a collection of grounded square boxes.

We prove Theorem~\ref{main} for the subclass of grounded square box graphs. While the grounded condition is used to ease the inductive argument, achieving square boxes rather than general grounded axis-aligned boxes is simply a bonus we get for free from the proof.

\begin{theorem}\label{grounded boxes}
	For every $g\ge 3$ and $k\ge 1$, there exists a grounded square box graph with girth at least $g$ and chromatic number at least $k$.
\end{theorem}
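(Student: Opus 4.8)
The plan is to prove Theorem~\ref{grounded boxes} by induction on $k$, with the girth parameter $g$ fixed throughout, mimicking the hypergraph version of Tutte's construction but executing each step geometrically with grounded square boxes and using Theorem~\ref{Gallai} to control the girth. For the base case $k=1$ a single grounded square box suffices. For the inductive step, suppose we have a collection $\mathcal{B}$ of grounded square boxes whose intersection graph $G$ has girth at least $g$ and chromatic number at least $k$; I want to build a new collection whose intersection graph has girth at least $g$ and chromatic number at least $k+1$. The idea is the standard one: take many disjoint ``copies'' of $\mathcal{B}$ together with a large independent set $I$ of new boxes, and for each way of choosing one vertex from each copy plus enough vertices of $I$, force a hyperedge-like gadget so that in any $k$-colouring, some copy receives a colouring that, combined with the colouring inherited on $I$, cannot be extended — the usual Tutte counting argument then gives $\chi \ge k+1$. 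The geometric realisation of ``picking one box from each copy and joining it into a common structure'' is what the grounded square box setting is designed to make possible: because a grounded square box meets $P_{x=y}$ in a vertical segment of prescribed length and position (two real parameters, essentially its height interval after we fix the square's side), a homothetic copy of a configuration in the plane $P_{x=y}$ corresponds to uniformly rescaling and translating an entire copy of $\mathcal{B}$, and collinear points along a line in $P_{x=y}$ that we want to be intersection witnesses become the $x_i$'s of a cycle.

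The key steps, in order, are as follows. First, I would set up coordinates so that each grounded square box is determined by its trace on $P_{x=y}$ (a vertical segment, giving its $z$-range and the common $x=y$ value of one vertical edge) together with its square side length, and verify that applying a homothetic map of $P_{x=y}\cong\mathbb{R}^2$ — or really a one-dimensional homothety acting on a suitable coordinate — sends a grounded square box configuration to another one, preserving the intersection graph. Second, I would encode the finitely many ``choice points'' of one copy of $\mathcal{B}$ as a finite set $T\subset\mathbb{R}$ with $|T|\ge 3$, apply Theorem~\ref{Gallai} with this $T$, the fixed girth $g$, and colour parameter $k$ to obtain a finite $X\subset\mathbb{R}$; the monochromatic homothetic copy of $T$ guaranteed by the theorem will be exactly the substructure on which the Tutte argument bites, and the ``no short cycle among homothetic copies of $T$'' conclusion will be what controls the girth of the whole construction. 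Third, I would place, for each homothetic copy $T'$ of $T$ sitting inside $X$, a rescaled-and-translated copy of $\mathcal{B}$ aligned along $T'$, plus the shared independent family $I$ of grounded square boxes, designed so that (a) an edge appears between the $i$-th copy-box and an $I$-box precisely according to the desired gadget, and (b) no new short cycles are created beyond those already accounted for. Fourth, I would run the colouring argument: in any $k$-colouring, $I$ gets some colouring; by Theorem~\ref{Gallai} applied to the induced colouring on $X$ there is a monochromatic homothetic copy $T'$; the copy of $\mathcal{B}$ on $T'$ together with the gadget then cannot be properly $k$-coloured, contradiction — so $\chi\ge k+1$. Finally, I would bound the girth: any cycle in the new graph either lies inside one copy of $\mathcal{B}$ (length $\ge g$ by induction), or uses boxes from several copies and hence induces a cycle among the corresponding homothetic copies of $T$, which has length $\ge g$ by the sparse Gallai theorem, or is short and local within a single gadget, which I design to be acyclic or of girth $\ge g$ by construction.

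The main obstacle I expect is the simultaneous control of the two competing demands in step three: the gadget attaching the copies to the independent set $I$ must be rich enough that a failure to extend the colouring of $I$ to some copy actually forces a failure of proper colouring (this is the Tutte mechanism and requires the gadget to ``read off'' which colour classes are already used), yet sparse enough that it introduces no cycles of length less than $g$ — and all of this must be realised with honest grounded square boxes, where intersection is a rigid geometric condition (two axis-aligned boxes intersect iff their projections to each axis overlap), not something we can stipulate freely. Getting the geometry to cooperate — choosing the $z$-ranges, the square side lengths, and the placement along each homothetic copy of $T$ so that exactly the intended intersections occur and nothing else — is the delicate engineering part, and it is precisely here that the grounded and square conditions are exploited to keep the configuration rigid and the bookkeeping tractable, while Theorem~\ref{Gallai} supplies the one nontrivial combinatorial input, namely a set of ``choice points'' that is simultaneously Gallai-Ramsey and cycle-sparse.
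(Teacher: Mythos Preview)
Your plan is essentially the paper's proof: fix $g$, induct on $k$, encode the previous collection $\mathcal{B}_{g,k}$ by the finite set $T=p_x(P_{x=y}\cap\mathcal{B}_{g,k})\subset\mathbb{R}$, apply Theorem~\ref{Gallai} to get $X$, take the independent family $\mathcal{B}_X=\{B_x:x\in X\}$ of thin grounded square boxes, and for each homothetic copy $T'$ of $T$ in $X$ attach a suitably mapped copy $\mathcal{B}_{T'}$ of $\mathcal{B}_{g,k}$ so that each box of $\mathcal{B}_{T'}$ meets exactly its corresponding $B_t$; the colouring and girth arguments are then exactly as you describe. The one piece you flag as the ``main obstacle''---realising the gadgets so that exactly the intended intersections occur---is in fact the easy part, and your worry that the gadget must be ``rich enough to read off used colours'' is a red herring: the gadget is nothing more than a perfect matching between the boxes of $\mathcal{B}_{T'}$ and $\{B_t:t\in T'\}$. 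Concretely, the paper takes each $B_x=[x,x+\epsilon]\times[x-\epsilon,x]\times[0,1]$ with $\epsilon$ smaller than all gaps in $X$, and for each $T'$ defines $h_{T'}(x,y,z)=(f_{T'}(x),f_{T'}(y),s_{T'}(z))$ where $f_{T'}$ is the $1$-dimensional homothety sending $T$ to $T'$ and $s_{T'}$ squeezes the $z$-coordinate into a private subinterval $I_{T'}\subset[0,1]$, the intervals $I_{T'}$ being pairwise disjoint. The disjoint $z$-slabs guarantee that distinct copies $\mathcal{B}_{T'}$ never interact, while the full $z$-range $[0,1]$ of each $B_x$ guarantees it meets every copy box whose $(x,y)$-footprint overlaps it---which by the choice of $\epsilon$ and the injectivity of $B\mapsto p_x(P_{x=y}\cap B)$ is exactly the intended one. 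One small calibration: you invoke Theorem~\ref{Gallai} with girth parameter $g$, but the paper uses $\lfloor g/3\rfloor$, since between consecutive $\mathcal{B}_X$-boxes on any cycle there sit at least two copy-boxes (each copy-box touches a unique $B_x$), so an $m$-cycle among the homothetic copies forces a box-cycle of length at least $3m$.
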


\begin{proof}
	This is trivially true for $k\le 3$, as all odd cycles are grounded square box graphs. So we fix $g$ and proceed inductively on $k$. Let ${\cal B}_{g,k}$ be a collection of grounded square boxes whose intersection graph has girth at least $g$ and chromatic number at least $k$. We may assume that each box has a non-empty interior, and by possibly performing small uniform scaling and translations along $P_{x=y}$ to individual boxes, we may assume that the points $p_x(P_{x=y}\cap B)$ and $p_x(P_{x=y}\cap B')$ are distinct for each pair of distinct boxes $B, B'\in {\cal B}_{g,k}$.
	
	Let $T=p_{x}(P_{x=y}\cap {\cal B}_{g,k})\subset \mathbb{R}$.
	By Theorem~\ref{Gallai} there exists a finite set $X\subset \mathbb{R}$ such  that every $k$-colouring of $X$ contains a monochromatic homothetic copy $T'$ of $T$ and no set of at most $\lfloor \frac{g}{3} \rfloor$ homothetic copies of $T$ contained in $X$ form a cycle. Let $\cal T$ be the set of homothetic copies of $T$ in $X$.
	
	Choose some $0<\epsilon < \min\{|x-x'| : x,x' \in X, x \not= x'\}$. Now for each $x\in X$, let $B_x$ be the grounded square box $[x,x+\epsilon] \times [x-\epsilon,x] \times [0,1]$. Let ${\cal B}_{X}$ be the collection of all such grounded square boxes. Clearly no pair of these boxes intersect.
	
	For each $T'\in {\cal T}$:
	\begin{enumerate}
		\item [1)] let $I_{T'}$ be an interval contained in $[0,1]$, such that the intervals $\{I_{T'}: T'\in {\cal T}\}$ are pairwise disjoint,
		
		\item [2)] let $s_{T'}$ be some homothetic map that maps the intervals of $p_z({\cal B}_{g,k})$ to intervals contained in $I_{T'}$,
		
		\item [3)] let $f_{T'}$ be the homothetic map that maps $T$ to $T'$ in $\mathbb{R}$,
		
		\item [4)] let $h_{T'}: \mathbb{R}^3 \mapsto \mathbb{R}^3$ be the map such that $h_{T'}(x,y,z)=(f_{T'}(x),f_{T'}(y),s_{T'}(z))$ for all $(x,y,z) \in \mathbb{R}^3$, and lastly
		
		\item [5)] let ${\cal B}_{T'}=h_{T'}({\cal B}_{g,k})$.
	\end{enumerate}
	As $f_{T'}$ and $s_{T'}$ are both homothetic maps, ${\cal B}_{T'}$ is a collection grounded square boxes whose intersection graph is isomorphic to that of ${\cal B}_{g,k}$. Furthermore each box of ${\cal B}_{T'}$ intersects exactly one of the boxes $\{B_t: t\in T'\} \subset {\cal B}_X$, and no box of ${\cal B}_{T'}$ intersects a box of ${\cal B}_{T^*}$ for any $T^*\in {\cal T}\backslash \{T'\}$.
	
	Let ${\cal B}_{g,k+1}$ be the union of the grounded square boxes ${\cal B}_X$ and $({\cal B}_{T'} : T'\in {\cal T})$. It remains to show that the intersection graph of ${\cal B}_{g,k+1}$ has girth at least $g$ and chromatic number at least $k+1$.
	
	We handle the chromatic number first. Suppose for sake of contradiction that there is a $k$-colouring of ${\cal B}_{g,k+1}$. Then by choice of $X$, there is a $T'\in {\cal T}$ such that the boxes $\{B_t: t\in T'\}$ are monochromatic. Every box of ${\cal B}_{T'}$ intersects a box of $\{B_t: t\in T'\}$, so the boxes of ${\cal B}_{T'}$ must be $(k-1)$-coloured. But the intersection graph of ${\cal B}_{T'}$ has chromatic number at least $k$, a contradiction.
	
	Next we consider the girth. By the inductive assumption, a cycle $C$ of length less than $g$ in ${\cal B}_{g,k+1}$ must contain at least one box $B$ of ${\cal B}_X$. Let $B_1,B_2$ be the boxes immediately after and before $B$ in the cycle $C$. Then there exist distinct $T_1,T_2\in {\cal T}$ such that $B_1\in {\cal B}_{T_1}$ and $B_2\in {\cal B}_{T_2}$. But then as $\cal T$ contains no cycle of length less than $\lceil \frac{g}{3} \rceil$, we see that $C$ must contain at least $\lceil \frac{g}{3} \rceil$ boxes of ${\cal B}_X$. Furthermore the boxes ${\cal B}_X$ are pairwise disjoint and there is no box that intersects two boxes of ${\cal B}_X$. Hence all cycles must have length at least $3 \lceil \frac{g}{3} \rceil \ge g$ as required.
\end{proof}

It only takes a minor modification of this proof to obtain a proof of Burling's~\cite{burling1965coloring} classical result as an application of Van der Waerden's theorem. In fact, here we can already achieve a girth of 6. One just has to arrange at the start that in addition to the points $p_x(P_{x=y}\cap B)$ and $p_x(P_{x=y}\cap B')$ being distinct for each pair of distinct boxes $B, B'\in {\cal B}_{6,k}$, we also have that $p_x(P_{x=y}\cap B)\in \mathbb{Q}$ for each $B \in {\cal B}_{6,k}$. Then there is some arithmetic progression in $\mathbb{Q}$ that contains $p_x(P_{x=y}\cap {\cal B}_{6,k})$, and so we may use Van der Waerden's theorem in place of Theorem~\ref{Gallai}. For larger girth we could also choose to use the sparse Van der Waerden's theorem~\cite{promel1988sparse} in place of Theorem~\ref{Gallai}.

Another interesting proof of Burling's result was given by Krawczyk and Walczak~\cite{online}. Their proof uses an online colouring approach and reproduces the same graphs constructed by Burling.

\section{Lines}\label{section4}

Before proving Theorem~\ref{main line}, we first present Norin's~\cite{norin} construction of double shift graphs as intersection graphs of lines in $\mathbb{R}^3$.
For a positive integer $n$ and a finite set $S \subset \mathbb{R}$ with $|S|=n$, the double shift graph $G_{n}$ is isomorphic to the graph where the vertex set is all the triples $(a,b,c)$ with $a,b,c\in S$ and $a<b<c$, and where two vertices $(a,b,c)$ and $(d,e,f)$ are adjacent when $b=d$ and $c=e$.

For a positive integer $n$, the double shift graph $G_n$ can be constructed as an intersection graph of lines in $\mathbb{R}^3$ as follows.
Let $S\subseteq \mathbb{R}$ be an algebraically independent set with $|S|=n$. Then for each triple $(a,b,c)$ with $a,b,c\in S$ and $a<b<c$, let
\[L_{(a,b,c)}(t)= (ab + bc + t, abc + bt, ab^2c + (ab + bc)t).\]
Then by design, we have that $L_{(a,b,c)}(cd) = L_{(b,c,d)}(ab)$, and using the fact that $S$ is algebraically independent, it can be shown that all the points of intersection between these lines are of this form. Thus the intersection graph of these lines is isomorphic to the double shift graph $G_n$.
Erd{\H{o}}s and Hajnal~\cite{erdos1964shiftgraphs} showed that $\lim_{n \to \infty} \chi(G_n) = \infty$, and so intersection graphs of lines in $\mathbb{R}^3$ are not $\chi$-bounded.

Another related and more general class that this applies to is the intersection graphs of unit length straight line segments in $\mathbb{R}^3$. Unlike segments of arbitrary length, in the plane these graphs are $\chi$-bounded~\cite{suk2014mono}.

Next we prove Theorem~\ref{main line}, that there are intersection graphs of lines in $\mathbb{R}^3$ with arbitrarily large girth and chromatic number.
The proof is similar to that of Theorem~\ref{grounded boxes}, we just have to modify the geometric arguments for this setting.
We restate Theorem~\ref{main line} slightly differently for convenience.

\begin{theorem}\label{line}
	For every $g\ge 3$ and $k\ge 1$, there exist lines in $\mathbb{R}^3$ whose intersection graph has girth at least $g$ and chromatic number at least $k$.
\end{theorem}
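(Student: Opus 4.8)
The plan is to mimic the inductive structure of the proof of Theorem~\ref{grounded boxes}, replacing grounded square boxes by lines in $\mathbb{R}^3$ and, crucially, replacing the role played by the disjoint boxes $\{B_x : x\in X\}$ (each a ``gadget'' attached to a single coordinate $x$) by a family of pairwise disjoint lines $\{\ell_x : x \in X\}$, one for each element of the set $X$ produced by Theorem~\ref{Gallai}. As in the box proof, we fix the target girth $g$ and induct on $k$; the base case $k\le 3$ is handled by odd cycles, which are easily realised as line intersection graphs in $\mathbb{R}^3$. Given a family ${\cal L}_{g,k}$ of lines whose intersection graph has girth at least $g$ and chromatic number at least $k$, we first normalise so that each line of ${\cal L}_{g,k}$ carries a distinguished ``contact point'' whose relevant coordinate values are distinct across the family; let $T$ be the (finite, size $\ge 3$) set of these values and apply Theorem~\ref{Gallai} to $T$ with parameters $k$ and $g$ to obtain $X$ and the family ${\cal T}$ of homothetic copies of $T$ inside $X$ such that no $\lfloor g/3\rfloor$ of them form a cycle and every $k$-colouring of $X$ has a monochromatic member of ${\cal T}$.

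Next I would set up the geometry. For each $x\in X$ put a line $\ell_x$, and arrange (as in the box case, using a separate ``$z$-like'' direction of room) that the lines $\{\ell_x\}$ are pairwise disjoint and that no single line can meet two of them simultaneously while staying consistent with a copy of ${\cal L}_{g,k}$. For each $T'\in{\cal T}$, let $f_{T'}$ be the homothetic map of $\mathbb{R}$ sending $T$ to $T'$, and build an affine map $h_{T'}:\mathbb{R}^3\to\mathbb{R}^3$ that applies $f_{T'}$ in the coordinate controlling the contact points and an independent homothety (shrinking into a private slab $I_{T'}\subseteq[0,1]$, with the slabs pairwise disjoint) in an auxiliary coordinate; since $h_{T'}$ is an affine map that is a positive scaling in each of the relevant coordinate directions, it sends lines to lines and preserves the intersection graph, so ${\cal L}_{T'}:=h_{T'}({\cal L}_{g,k})$ again has girth $\ge g$ and chromatic number $\ge k$. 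The key design requirement, exactly mirroring the box proof, is that each line of ${\cal L}_{T'}$ meets exactly one line of $\{\ell_t : t\in T'\}$ (the one indexed by its own contact value, transported by $f_{T'}$) and meets no line of ${\cal L}_{T^*}$ for $T^*\neq T'$; the disjoint slabs $I_{T'}$ give the latter for free, and a careful choice of where the contact point of $\ell_x$ sits, together with a small-$\epsilon$ genericity argument, gives the former. Set ${\cal L}_{g,k+1} = \{\ell_x : x\in X\} \cup \bigcup_{T'\in{\cal T}} {\cal L}_{T'}$.

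The verification then runs as in Theorem~\ref{grounded boxes}. For the chromatic number: in any $k$-colouring of ${\cal L}_{g,k+1}$, restricting to the colours of $\{\ell_x : x\in X\}$ gives a $k$-colouring of $X$, hence a monochromatic $T'\in{\cal T}$; every line of ${\cal L}_{T'}$ touches a line of the monochromatic set $\{\ell_t:t\in T'\}$, so ${\cal L}_{T'}$ is properly coloured with only $k-1$ colours, contradicting $\chi({\cal L}_{T'})\ge k$. For the girth: by induction a cycle $C$ of length $<g$ must use at least one line $\ell_x$; its two neighbours in $C$ lie in ${\cal L}_{T_1},{\cal L}_{T_2}$ for distinct $T_1,T_2\in{\cal T}$ (distinct because a single line of some ${\cal L}_{T'}$ meets only one $\ell_t$), so $C$ induces a closed walk among the members of ${\cal T}$; since no $\lfloor g/3\rfloor$ copies of $T$ form a cycle, $C$ must pass through at least $\lceil g/3\rceil$ lines of $\{\ell_x\}$, and since these are pairwise disjoint and no line meets two of them, between consecutive ones $C$ needs at least two intermediate lines, giving $|C|\ge 3\lceil g/3\rceil\ge g$.

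I expect the main obstacle to be purely geometric rather than combinatorial: unlike axis-aligned boxes, lines in $\mathbb{R}^3$ generically do \emph{not} intersect, so the work is to produce, for a single level, a concrete family ${\cal L}_{g,k}$ of lines with prescribed incidences together with a ``socket'' structure --- a marked point on each line and a transversal line $\ell_x$ through it --- that is robust under the affine maps $h_{T'}$ and under the disjointness requirements between different ${\cal L}_{T'}$'s. Norin's explicit parametrisation $L_{(a,b,c)}(t)$ from Section~\ref{section4} suggests the right template: encode the ``contact coordinate'' as the parameter value at which a line is to be met, and choose the lines $\ell_x$ to be, say, coordinate-axis-parallel lines sitting in the auxiliary slab so that transversality is automatic and two such sockets are never hit by one line. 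Making these incidences \emph{exact} (no spurious intersections) is where an algebraic-independence / genericity argument, in the spirit of the one already used for the double shift graph, will be needed.
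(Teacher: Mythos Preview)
Your combinatorial skeleton matches the paper exactly: induct on $k$, apply the sparse Gallai theorem to a set $T$ of ``contact values'' extracted from $\mathcal{L}_{g,k}$, lay down pairwise-disjoint socket lines $\ell_x$ for $x\in X$, plant an affine copy $\mathcal{L}_{T'}$ for each $T'\in\mathcal{T}$, and run the chromatic-number and girth verifications verbatim from the box proof. That part is fine, and it is precisely what the paper does.

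The gap is the step you declared ``for free.'' You assert that shrinking the auxiliary coordinate into a private slab $I_{T'}\subseteq[0,1]$ forces $\mathcal{L}_{T'}$ to be disjoint from $\mathcal{L}_{T^*}$ for $T^*\neq T'$. That is the box argument and it does not carry over: a line is unbounded, so an affine map $(x,y,z)\mapsto(f_{T'}(x),\,y,\,\alpha z+\beta)$ does not confine its image to any horizontal slab, and nothing in your setup prevents two such images from meeting. The paper replaces the slab trick by a genuinely different device. It fixes a plane $P$ transverse to every line of $\mathcal{S}_{g,k}$ and takes the socket lines to be the lines in $P$ perpendicular to a carefully chosen auxiliary line $L\subset P$; then any line transverse to $P$ hits $P$ in a single point and hence meets at most one socket automatically. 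To separate the copies $\mathcal{S}_{T'}$, the paper composes the homothety sending $T$ to $T'$ with a \emph{translation along the common direction of the socket lines}, which preserves all the desired socket incidences. The delicate point is the choice of $L$: one demands that there be no plane $Q$ with $P\cap Q$ perpendicular to $L$ such that $Q$ and a translate of $Q$ contain two non-parallel lines of $\mathcal{S}_{g,k}$; since each pair of non-parallel lines forbids only one direction, a suitable $L$ exists. This condition guarantees that after the translations, lines from distinct copies that happen to be coplanar must be parallel, hence disjoint. Your fallback suggestion of an algebraic-independence/genericity argument might also be made to work, but it would have to replace the slab claim entirely rather than merely patch the socket incidences.
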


\begin{proof}
	The theorem is trivially true for $k\le 3$, as all odd cycles are intersection graphs of lines in $\mathbb{R}^3$. So we fix $g$ and proceed inductively on $k$.
	Let ${\cal S}_{g,k}$ be a collection of lines in $\mathbb{R}^3$ whose intersection graph has girth at least $g$ and chromatic number at least $k$.
	
	Choose a plane $P\subset \mathbb{R}^3$ such that:
	\begin{itemize}
		\item every line of ${\cal S}_{g,k}$ intersects $P$, and
		
		\item no pair of distinct lines of ${\cal S}_{g,k}$ intersect $P$ at the same point.
	\end{itemize}
	
	Let $D$ be the set of points of $P$ contained in a line of ${\cal S}_{g,k}$, in particular there is a one to one correspondence between points of $D$ and lines in ${\cal S}_{g,k}$. 
	
	Now choose an auxiliary line $L$ contained in $P$ such that:
	\begin{itemize}
		\item no pair of distinct points in $D$ are contained in a single straight line segment $L'\subset P$ that is perpendicular to $L$, and
		
		\item there is no plane $Q$ such that $P\cap Q$ is a line perpendicular to $L$, the plane $Q$ contains a line $S$ of ${\cal S}_{g,k}$, and some translation of $Q$ contains a line $S^*$ of ${\cal S}_{g,k}$ that is not parallel to $S$.
	\end{itemize}
	
	Such a line $L$ can be chosen, since for non-parallel lines $S,S^*\in {\cal S}_{g,k}$, there is a unique plane $Q$ such that $Q$ contains $S$ and some translation of $Q$ contains $S^*$.
	
	For each $d\in D$, let $t_d$ be the point of $L$ such that there is a straight line segment contained in $P$ that is perpendicular to $L$ and contains both $t_d$ and $d$.
	Now let ${T=\{t_d : d\in D \}}$. By choice of $L$, there is a one to one correspondence between points of $T$ and lines of ${\cal S}_{g,k}$.
	
	By Theorem~\ref{Gallai}, there exists a finite set $X\subset L$ such  that every $k$-colouring of $X$ contains a monochromatic homothetic copy $T'$ of $T$ and no set of at most $\lfloor \frac{g}{3} \rfloor$ homothetic copies of $T$ contained in $X$ form a cycle. Let $\cal T$ be the set of homothetic copies of $T$ in $X$. For each $x\in X$, let $S_x$ be the line contained in $P$ that is perpendicular to $L$ and contains the point $x$. Let ${\cal S}_X = \{S_x : x\in X\}$. There is a one to one correspondence between lines of ${\cal S}_{g,k}$ and lines of $\{S_t : t\in T\}$. Notice that each line $S$ of ${\cal S}_{g,k}$ intersects its corresponding line $S_t$ and no other line contained in ${\cal S}_X$.
	
	Let ${\cal S}_{T}= {\cal S}_{g,k}$.
	For each $T'\in {\cal T}\backslash \{T\}$, let ${\cal S}_{T'}$ be a homothetic copy of ${\cal S}_{T}$ with a homothetic map $f_{T'}$ such that:
	\begin{itemize}
		\item each line of ${\cal S}_{T'}$ intersects its corresponding line of $\{S_{t'} : t' \in T'\}$ and no other line of ${\cal S}_X$, and
		
		\item for distinct $T_1,T_2\in {\cal T}$, no line of ${\cal S}_{T_1}$ intersects a line of ${\cal S}_{T_2}$.
	\end{itemize}
	
	For each $T'\in {\cal T}\backslash {T}$, the homethetic map $f_{T'}$ would be a composition of a homethetic map that maps $T$ to $T'$ in $\mathbb{R}^3$ (ensuring the correct intersections between lines of ${\cal S}_{T'}$ and $\{S_{t'} : t' \in T'\}$) and some translation in the direction parallel to the lines of ${\cal S}_X$ so as to avoid lines of ${\cal S}_{T'}$ intersecting lines of another homothetic copy of ${\cal S}_{T}$.
	Such translations exist by the choice of $L$, since for each $x\in X$ and translated copies $S, S^*$ of lines in ${\cal S}_{T}= {\cal S}_{g,k}$ such that $S_x, S , S^*$ are coplanar, the lines $S$ and $S^*$ must be parallel.
	
	Let ${\cal S}_{g,k+1}$ be the union of the lines of ${\cal S}_X$ and $({\cal S}_T : T\in {\cal T})$.
	The proof that the intersection graph of ${\cal S}_{g,k+1}$ has girth at least $g$ and chromatic number at least $k+1$ is essentially the same as for the boxes in Theorem~\ref{grounded boxes}.
\end{proof}

\section*{Acknowledgments} 
The author would like to thank Jim Geelen, Matthew Kroeker, and Rose McCarty for helpful discussions, which in particular improved the proof of Theorem~\ref{main}.
The author would also like to thank Tom Johnston for pointing out an issue in the proof of Theorem~\ref{line} in a previous version of this manuscript.
The author thanks Sergey Norin for allowing the inclusion of his construction of double shift graphs as line intersection graphs. Lastly the author thanks the anonymous referees for some helpful suggestions on the presentation of the paper.

\bibliographystyle{amsplain}


\begin{aicauthors}
\begin{authorinfo}[jd]
  James Davies\\
  Department of Combinatorics and Optimization\\
  University of Waterloo\\
  Waterloo, Canada\\
  jgdavies\imageat{}uwaterloo\imagedot{}ca 
\end{authorinfo}

\end{aicauthors}

\end{document}